\algnewcommand\algorithmicforeach{\textbf{for each}}
\DeclareMathOperator\dist{dist}
\DeclareMathOperator\FIX{\mathrm{\textbf{FIX}}}
\let\le\leqslant
\let\ge\geqslant
\let\leq\leqslant
\let\epsilon\varepsilon
\newcommand{\blank}{\mathsf{Blank}}
\title{Tight Bounds on the Clique Chromatic Number}
\author{Gwena\"el Joret\thanks{Supported by an ARC grant from the Wallonia-Brussels Federation of Belgium and a CDR grant from the National Fund for Scientific Research (FNRS).}\\
\small Computer Science Department\\[-0.8ex]
\small Universit\'e libre de Bruxelles\\[-0.8ex] 
\small Brussels, Belgium\\
\small\tt gjoret@ulb.ac.be\\
\and
Piotr Micek\thanks{Supported by the National Science Center of Poland under grant no.\ 2018/31/G/ST1/03718.}\\
\small Theoretical Computer Science Department \\[-0.8ex]
\small Jagiellonian University \\[-0.8ex] 
\small Krak\'ow, Poland\\
\small\tt piotr.micek@uj.edu.pl\\
\and 
Bruce Reed \\
\small School of Computer Science\\[-0.8ex]
\small McGill University\\[-0.8ex] 
\small Montreal, Canada\\
\small\tt breed@cs.mcgill.ca\\
\and 
Michiel Smid\thanks{Supported by NSERC.}\\
\small School of Computer Science\\[-0.8ex]
\small Carleton University\\[-0.8ex] 
\small Ottawa, Canada\\
\small\tt michiel@scs.carleton.ca\\
}
\begin{document}

\maketitle

\begin{abstract}
The clique chromatic number of a graph is the minimum number of colours needed to colour its vertices so that no inclusion-wise maximal clique, which is not an isolated vertex, is monochromatic. 
We show that every graph of maximum degree $\Delta$ has clique chromatic number $O\left(\frac{\Delta}{\log~\Delta}\right)$.  We  obtain as a corollary that every $n$-vertex graph has clique chromatic number $O\left(\sqrt{\frac{n}{\log ~n}}\right)$. 
Both these results are tight.
\end{abstract}

\section{Overview and History}

A {\em clique colouring} of a graph $G$ is a colouring of its vertices so that no inclusion-wise maximal clique of size at least two is monochromatic. 
The {\em clique chromatic number} of $G$, introduced in \cite{BGGPS}, is the minimum number of colours in a clique colouring of $G$.  
We show that every graph of maximum degree $\Delta$ has clique chromatic number $O\left(\frac{\Delta}{\log~\Delta}\right)$.  We  obtain as a corollary that every $n$-vertex graph has clique chromatic number $O\left(\sqrt{\frac{n}{\log ~n}}\right)$. 
Both these results are tight. 
Indeed, if the graph is triangle free, then the clique chromatic number coincides with the usual chromatic number, and these two bounds are best possible for the chromatic number~\cite{K}. 

The first results on the clique chromatic number of which we are aware were obtained in 1991 and concerned comparability graphs of partial orders.
A {\em partial order} $(X,\le)$ consists of a ground set $X$ and a reflexive, transitive, antisymmetric relation $\le$. 
Two elements $x$ and $y$ of $X$ are {\it comparable}  if either $x \le y$ or $y \le x$. 
The {\em comparability graph} of this partial order has vertex set $X$, and two vertices are joined by an edge if they are comparable.

It is easy to see that  comparability graphs have clique chromatic number at most two.
In \cite{DSSW},  Duffus, Sands, Sauer, and Woodrow, showed that there are  complements of comparability graphs whose clique chromatic number exceeds two, and asked if the clique chromatic number of perfect graphs, a superclass of  both   the comparability graphs and their complements was bounded by a constant. 
In \cite{DKT}, Duffus, Kierstead, and Trotter show that  the complements of comparability graphs have clique chromatic number at most $3$. 
The question about the clique chromatic number of perfect graphs was popularized by Jensen and Toft in their  1994 manuscript~\cite{JT} setting out the most interesting questions about colouring graphs. It was recently answered in the negative by Charbit, Penev, Thomass\'e, and Trotignon~\cite{CPTT}. 

There has been significant interest on the clique chromatic number of  other special classes of graphs (see e.g.\ \cite{BGGPS,MS}), and  the random graph $G_{n,p}$ for 
various values of $p$~\cite{MMP}. As far as we are aware, the only results concerning arbitrary graphs are unpublished results of Kotlov which are mentioned in~\cite{BGGPS}. That paper 
sets out that  Kotlov showed that the clique chromatic number of an $n$-vertex graph is at most $2\sqrt{n}$ and notes that it is unknown whether the ratio of the clique chromatic number to 
$\sqrt{n}$ goes to zero as $n$ goes to infinity.  Our main result shows that, in fact, it does. 

\section{Our Results}

In what follows $N_G(v)=N(v)$, the neighbourhood of $v$ in $G$, is the set of vertices of $G$ joined to $v$ by an edge.  
Our main result is the following:

\begin{theorem}
\label{maintheorem}
For every  $\epsilon>0$, there exists a $\Delta_\epsilon$ such that, every graph $G$ with maximum degree $\Delta \ge \Delta_\epsilon$ 
has  clique chromatic number  at most $\frac{(1+\epsilon)\Delta}{\log~\Delta}$. 
\end{theorem}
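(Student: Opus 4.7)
My plan is to exploit the following reduction. Let $H$ be the spanning subgraph of $G$ whose edges are exactly the edges $uv$ of $G$ with $N(u)\cap N(v)=\emptyset$; equivalently, the edges of $H$ are the maximal cliques of $G$ of size two. Then $H$ is triangle-free, since any triangle $abc$ in $H$ would give $c\in N(a)\cap N(b)$ in $G$, contradicting $\{a,b\}\in E(H)$. As $\Delta(H)\le\Delta$, the Johansson--Molloy bound on the chromatic number of triangle-free graphs yields a proper coloring of $H$ with at most $(1+\epsilon/2)\Delta/\log\Delta$ colors, which already leaves every size-two maximal clique of $G$ non-monochromatic. This reduction feels natural because the matching lower bound in the theorem is realised by triangle-free graphs, for which clique colouring and proper colouring coincide; so the extremal obstruction is in some sense the ``lonely edges'' assembled in $H$.

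The remaining task is to break monochromatic maximal cliques of $G$ of size at least three. Here I would turn to randomness. For a uniform $k$-colouring of $V(G)$ with $k=(1+\epsilon)\Delta/\log\Delta$ colours, a fixed maximal clique of size $s\ge 3$ is monochromatic with probability $k^{1-s}\le k^{-2}$, which is small. The aim is to show that one can \emph{simultaneously} arrange properness on $H$ and non-monochromaticity on every maximal clique of size at least three. A natural approach is a Moser--Tardos style resampling: start from a Johansson--Molloy colouring of $H$; while some maximal clique $K$ of $G$ of size at least three is monochromatic, resample the colours of (a subset of) the vertices of $K$ among the colours that preserve properness on $H$ at those vertices, and argue that the procedure terminates with positive probability and outputs a valid clique colouring.

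The main obstacle will be to control the dependencies tightly enough to preserve the sharp constant $1+\epsilon$ without splitting the budget between the two phases. A vertex can lie in very many maximal cliques of $G$, so a direct application of the Lov\'asz Local Lemma with one bad event per maximal clique is too weak, even once the size-two cliques are taken care of by $H$-properness. A finer analysis, probably in the spirit of the nibble or entropy-compression arguments used for Molloy's theorem itself, seems necessary: one would iteratively colour a small fraction of the vertices in each round, maintaining good probabilistic control over the available-colour lists, and ensuring that both the $H$-properness and the non-monochromaticity of larger cliques are preserved in expectation. The interplay between the resampling/iteration and the ``forbidden colour'' constraint induced at each vertex by $H$-properness is where I expect the technical heart of the proof to lie.
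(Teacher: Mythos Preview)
Your reduction to the triangle-free subgraph $H$ of ``lonely edges'' is correct, and $H$-properness does handle all size-two maximal cliques. But your plan has a genuine gap precisely where you locate it: you have no concrete mechanism for making the larger maximal cliques non-monochromatic while preserving $H$-properness and the sharp constant. You correctly observe that a vertex may lie in exponentially many maximal cliques, so a Local Lemma or resampling scheme with one bad event per maximal clique is hopeless; you then gesture at a nibble/entropy-compression refinement without saying what the bad events, the resampling rule, or the invariant would be. As stated, this is a description of the difficulty rather than a way around it.

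The paper's proof sidesteps the obstacle by never enumerating maximal cliques at all. It runs Molloy's entropy-compression procedure directly on $G$ (not on $H$), with a single modification to the recolouring step: when fixing a flaw at $v$ and recolouring $N(v)$, each $u\in N(v)$ draws its new colour uniformly from the list $L^v_u$ consisting of $\blank$ together with the colours of $\mathcal{C}_u$ not appearing on $N(u)\setminus N(v)$, instead of from $L_u$. The key structural observation is that any non-$\blank$ monochromatic clique produced by such a recolouring must lie entirely inside $N(v)$, and hence is not maximal because $v$ can be appended. So the invariant ``partial clique colouring'' is maintained automatically, and Molloy's flaw definitions, the reconstruction lemma, and the termination bound carry over with only notational changes (replacing $L_u$ by $L^v_u$). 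In particular there is no budget split and no second phase: the same $(1+\epsilon)\Delta/\log\Delta$ colours do all the work at once. Your decomposition into $H$-properness plus large cliques, while natural, creates exactly the coupling problem that this one-line observation dissolves.
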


From which we obtain:

\begin{corollary}
The clique chromatic number of an $n$-vertex graph $G$  is $O\left(\sqrt{\frac{n}{\log~n}}\right)$. 
\end{corollary}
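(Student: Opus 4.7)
The plan is to reduce the corollary to Theorem~\ref{maintheorem} via an iterative peeling argument that at each step removes the closed neighbourhood of a high-degree vertex. Write $\chi_c(G)$ for the clique chromatic number of $G$ and $N[v]$ for the closed neighbourhood $N(v)\cup\{v\}$. The key structural lemma I would establish first is that
\[
\chi_c(G) \le \chi_c(G - N[v]) + 2
\]
for every graph $G$ and every $v \in V(G)$. To prove this, I would take an optimal clique colouring of $G - N[v]$ on some palette $P$, introduce two fresh colours $\ast_1,\ast_2\notin P$, colour every vertex of $N(v)$ with $\ast_1$, and colour $v$ itself with $\ast_2$. A short case analysis on a maximal clique $C$ of $G$ with $|C|\ge 2$ (according to whether $v\in C$, $C\subseteq N(v)$, $C\subseteq V(G)\setminus N[v]$, or $C$ straddles $N(v)$ and $V(G)\setminus N[v]$) shows that $C$ is not monochromatic; in particular the case $C\subseteq N(v)$ with $v\notin C$ is ruled out because $C\cup\{v\}$ would be a strictly larger clique contradicting maximality of $C$, and the case $C\subseteq V(G)\setminus N[v]$ reduces to maximality of $C$ in $G - N[v]$.

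With this lemma in hand I would iterate. Set $G_0 := G$; while $\Delta(G_i) \ge \sqrt{|V(G_i)|\log|V(G_i)|}$, pick a vertex $v_i$ of maximum degree in $G_i$ and set $G_{i+1} := G_i - N_{G_i}[v_i]$. When the process terminates at some $G_T$ we have $\Delta(G_T) < \sqrt{|V(G_T)|\log|V(G_T)|}$, so Theorem~\ref{maintheorem} (with, say, $\epsilon=1$) yields a clique colouring of $G_T$ using $O\bigl(\sqrt{n/\log n}\bigr)$ colours. Extending back up using the key lemma costs two fresh colours per step, so $\chi_c(G)\le 2T + O(\sqrt{n/\log n})$.

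The main — and essentially only — calculation is bounding the number of peeling steps $T$. Since $|V(G_{i+1})| \le |V(G_i)| - \sqrt{|V(G_i)|\log|V(G_i)|}$, I would compare the discrete recurrence to the separable ODE $n'(t) = -\sqrt{n\log n}$, whose solution $t(n) \asymp \int_{\mathcal O(1)}^n \mathrm dm/\sqrt{m\log m} \asymp 2\sqrt{n/\log n}$ gives $T = O(\sqrt{n/\log n})$. Summing both contributions yields the desired bound $\chi_c(G) = O(\sqrt{n/\log n})$. The threshold $\sqrt{n\log n}$ is the unique choice (up to constants) for which both the peeling cost $2T$ and the final Theorem~\ref{maintheorem} application cost $O(\sqrt{n/\log n})$; any smaller threshold makes $T$ too large, while any larger threshold makes the last step too expensive.
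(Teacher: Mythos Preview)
Your approach is correct and matches the paper's strategy: iteratively peel off closed neighbourhoods of high-degree vertices, then apply Theorem~\ref{maintheorem} to the low-degree remainder. The paper's execution is simpler in two respects: it uses a \emph{fixed} threshold $\sqrt{n\log n}$ (with $n=|V(G)|$ the original size) rather than your shrinking threshold $\sqrt{|V(G_i)|\log|V(G_i)|}$, so each step removes at least $\sqrt{n\log n}$ vertices and the bound $k\le\sqrt{n/\log n}$ is immediate---no ODE comparison is needed; and it gives all the peeled centres $v_1,\dots,v_k$ a single shared colour~$0$ (they form a stable set by construction) and each $N_{G_{i-1}}(v_i)$ colour~$i$, spending $k+1$ new colours rather than the $2k$ your lemma uses---though this does not affect the asymptotics.
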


\begin{proof}[Proof of corollary]
We choose a maximal sequence $v_1, \dots,v_k$ of  vertices of $G$ such that setting $G_0=G$, and $G_i=G_{i-1} -v_i-N_{G_{i-1}}(v_i)$, 
we have: $|N_{G_{i-1}}(v_i)| \ge  \sqrt{n \log ~n}$.
Clearly $k \le  \sqrt{\frac{n}{\log ~n}}$.
 Now, $G_k$ has maximum degree at most  $ \sqrt{n\log ~n}$.
Hence by Theorem \ref{maintheorem}, it has  a clique colouring with  $O\left(\sqrt{\frac{n}{\log ~n}}\right)$ colours.  We extend such a clique  colouring of 
 $G_k$, which uses colours not including $\{0,1,...,k\}$,  to a clique colouring of $G$ by colouring, for $1 \le i \le k$, 
each $v_i$ with colour $0$, and $N_{G_{i-1}}(v)$ with colour $i$.

Since the colours we use on $V(G_k)$ are  distinct from the colours we use on  $V(G)-V(G_k)$, and we construct a clique colouring in $G_k$, if there is a monochromatic maximal clique with at least two vertices in $G$, it must be coloured $i$ for some $i \le k$. Now, the vertices coloured 0 are a stable set, and the vertices coloured $i$  for $1 \le i \le k$ are all adjacent to $v_i$ which is not coloured $i$. So, we do indeed have a clique colouring of $G$. 
\end{proof}

\begin{proof}[Proof of main theorem]
In  2018, Molloy~\cite{M} proved 

\begin{theorem}[\cite{M}, Theorem 1] 
For every $\epsilon>0$,  there exists a $\Delta_\epsilon$ such that, every triangle-free graph $G$ with maximum degree $\Delta \ge \Delta_\epsilon$ 
has list chromatic number  at most  $\frac{(1+\epsilon)\Delta}{\log~\Delta}$. 
\end{theorem}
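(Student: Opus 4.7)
The plan is to follow the nibble-style approach pioneered by Johansson for triangle-free graphs and to sharpen it via entropy compression to obtain the asymptotically tight constant $1+\epsilon$. First I would reduce to the case where every list has size exactly $L = \lceil (1+\epsilon)\Delta/\log \Delta \rceil$ and every vertex has degree exactly $\Delta$; this is standard. The proof is then an iterative random refinement that keeps driving the ratio of list-size to uncoloured-degree upwards, closing with a direct application of the Lov\'asz Local Lemma once that ratio is large enough.

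The procedure consists of $T$ rounds of random partial colouring. In round $t$, each still-uncoloured vertex $v$ tentatively picks a colour uniformly from its current list $L_v^{(t)}$, keeps the tentative choice with a fixed small probability $p$, and permanently takes it only if no neighbour also keeps the same colour. At each vertex $v$ I would track the list size $\ell_t(v) := |L_v^{(t)}|$ and the number $d_t(v)$ of still-uncoloured neighbours, aiming to maintain an invariant of the form $\ell_t(v) \ge (1 + \delta_t)\, d_t(v)$ with slack $\delta_t$ that decays slowly in $t$. Once $d_t(v)$ is sufficiently small compared with $\ell_t(v)$, a direct LLL step finishes the colouring.

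Triangle-freeness enters through the survival probability of a colour $c \in L_v^{(t)}$. If $u$ and $u'$ are both neighbours of $v$, then triangle-freeness forces $u \not\sim u'$, so the events ``$u$ tentatively picked $c$ and kept it'' for distinct neighbours of $v$ do not themselves create fresh conflicts and behave almost independently. A careful expectation calculation then shows that $\ell_t(v)$ decays at a strictly slower rate than $d_t(v)$, so the ratio $\ell_t(v)/d_t(v)$ grows geometrically in $t$; iterating the right number of rounds produces the tight leading constant $1+\epsilon$ rather than the much weaker multiplicative constant that Johansson's original argument would give.

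The hardest step will be concentration combined with careful entropy bookkeeping. The quantities $\ell_t(v)$ and $d_t(v)$ are intricate functions of many weakly correlated random choices, so Talagrand-style concentration must be applied round by round and then stitched together via the LLL without the failure probabilities accumulating above $1$. To shave the leading constant down from some generic $O(1)$ to exactly $1+\epsilon$, I expect to need the Moser--Tardos entropy compression framework in place of the blunt LLL, because a naive union bound over the $T$ rounds loses precisely the $\log$-factor one needs to save. Managing this entropy account across all rounds, while ensuring that the ``bad'' vertices where the invariant fails remain sparse in the dependency graph, is the technical heart of the argument.
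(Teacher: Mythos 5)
The theorem you are asked to prove is Molloy's theorem, which the paper quotes as a black box; but since the paper's main result is proved by adapting Molloy's argument, its architecture is laid out there in full and a comparison is possible. Molloy's proof is emphatically \emph{not} an iterated nibble. It is a one-shot argument: every vertex starts blank; a resampling algorithm $\FIX$ repeatedly uncolours and uniformly recolours the neighbourhood of a vertex at which one of two local flaws holds ($B_v$: fewer than $L=\Delta^{\epsilon/2}$ colours remain available at $v$; $Z_v$: the blank neighbours of $v$ retain too many colours shared with $v$); termination of $\FIX$ is proved by an entropy-compression/record-reconstruction argument (Lemmas 7--9 of \cite{M}); and a flaw-free partial colouring is then completed in a single step by the Local Lemma (Lemma 5 of \cite{M}). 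Triangle-freeness enters only through the fact that $N(v)$ is an independent set, so the colours assigned to $N(v)$ during a recolouring are mutually independent, which is what yields the $\Delta^{-4}$ bounds on the flaw probabilities.

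The gap in your proposal is the central, unargued claim that the round-by-round analysis of the ratio $\ell_t(v)/d_t(v)$ ``produces the tight leading constant $1+\epsilon$.'' This is precisely what the nibble has never been shown to deliver: Johansson-style iterations, even with the concentration steps done optimally, lose a constant factor in every round because the activation probability $p$ must be kept small to control conflicts, and the expected decay rates of $\ell_t$ and $d_t$ are coupled so that the argument only closes when the initial list size exceeds $\Delta/\log\Delta$ by a constant factor strictly greater than $1$ (the best constants obtained along these lines before Molloy's work were around $4$). Your proposed remedy --- substituting Moser--Tard\'os entropy compression for ``the blunt LLL'' --- does not address this: entropy compression changes how one certifies that a good outcome exists, not the expected trajectories of $\ell_t$ and $d_t$, and it is in those trajectories that the constant is lost. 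The missing idea is to abandon the iteration entirely: perform a single random partial colouring with an explicit blank option, quantify success by local flaws whose probabilities are bounded by a first-moment computation exploiting the independence of the choices within $N(v)$, and establish the existence of a flaw-free partial colouring by counting executions of a resampling algorithm. As written, your argument would at best recover a bound of the form $C\Delta/\log\Delta$ for some constant $C>1$, not $(1+\epsilon)\Delta/\log\Delta$.
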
 

To obtain the statement of Theorem 1 from the statement of this theorem we simply need to remove {\it triangle-free} and change {\em  list} to  {\it clique}. 
To obtain the proof of Theorem 1 from Molloy's proof we make only a few  minor adjustments. There is one small subtle difference which we need to point out
however. 

The number of colours we can use 
is $q=\lfloor \frac{(1+\epsilon)\Delta}{\log~\Delta} \rfloor$, and our vertices are each assigned the list ${\cal C}_v =\{1,2,\dots,q\}$ of colours. (Since Molloy is dealing with list colouring
the list of colours assigned to his vertices are preasssigned. He also neglects to round $q$ down in his definition but he is indeed using the same value of $q$ as we do throughout).  

By a  {\em  partial clique colouring}, we mean a colouring of some of the vertices of $G$ using colours in $\{1,\dots,q\}$ such that no maximal clique of size at least two is monochromatic. 
We assign the uncoloured vertices a new colour $\blank$, and thus our condition is actually that the vertices of any monochromatic maximal clique of size at least two are coloured $\blank$. The main work in the proof is to show that there is a partial colouring with the following property: \\

\begin{minipage}{0.02\textwidth}
\centering
\vspace{-1cm}
\begin{equation} 
\label{prop}
\end{equation}
\end{minipage}
\quad 
\begin{minipage}{0.82\textwidth}
The partial clique  colouring can be extended  by assigning a colour  from  ${\cal C}_v=\{1,\dots,q\}$ to each uncoloured vertex $v$ such that none of these vertices is in a monochromatic edge.
\end{minipage}

$ $

If we are given a partial clique  colouring and an extension of it as in \eqref{prop}, then the resulting colouring is a clique colouring of $G$. So, to show the theorem, it suffices to prove~\eqref{prop}. 
 
In proving that  a  partial  clique colouring  satisfies \eqref{prop}, we will need to consider for each vertex $v$, the set of colours available to be assigned to $v$,
and the set of  uncoloured neighbours for $v$. 
 For a partial  clique colouring $\sigma$, we define  the following  parameters for each vertex $v$:
\begin{itemize}
\item $L_v$, the   union of $\{\blank\}$ and ${\cal C}_v-\{\sigma(w) | ~w \in N(v)\}$;
\item for every colour  $c$,  the following set $T_{v,c}$ 
\[
T_{v,c} = \left\{
\begin{array}{ll}
\{w\; |~w  \in N(v), \; \sigma(w)=\blank, \; c \in L_w\} & \textrm{ if } c\neq\blank \\
\emptyset & \textrm{ otherwise.}
\end{array}
\right.
\]
\end{itemize}

We  set $L=\Delta^{\epsilon/2}$ and define the following two  {\it flaws} (events) for a partial clique colouring:
\begin{itemize}
\item $B_v: |L_v|<L$,
\item $Z_v:  \sum_{c \in L_v} |T_{v,c}| >\frac{1}{10}L \cdot |L_v|$.
\end{itemize}

If a flaw $f$ is $B_v$ or $Z_v$  then we say $v$ is the vertex of $f$ and define $v(f)$ to be $v$. 
The proof of Lemma 5 of \cite{M} shows that if a partial colouring has no flaws, then it satisfies property \eqref{prop}. 
Specifically, it shows  that if every uncoloured vertex is assigned a uniformly random colour from $L_v - \{\blank\}$,  
then with positive probability we obtain an extension in which none of these vertices is in a monochromatic edge. 
The proof is a standard application of the local lemma. 
Thus we have the following lemma. 

\begin{lemma}[corresponds to Lemma 5 in \cite{M}, proof is verbatim the same]
\label{lem5}
Suppose we have a partial clique colouring $\sigma$ with no flaws, i.e., such that neither $B_v$ nor $Z_v$ holds for any vertex $v\in V(G)$. 
Then we can extend $\sigma$ by colouring each blank vertex $v$ with a colour in $\mathcal{C}_v$ to obtain a clique colouring of $G$. 
\end{lemma}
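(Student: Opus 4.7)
The plan is to exhibit a random extension of $\sigma$ and use the Lovász Local Lemma to show it is a clique colouring with positive probability. First I would reduce the task to forbidding monochromatic edges that meet a blank vertex: if such an extension $\sigma^*$ existed but some maximal clique $K$ with $|K|\geq 2$ were monochromatic under $\sigma^*$ in some colour $c$, then $c\neq\blank$ (every vertex has been coloured), and not every vertex of $K$ can have been coloured already by $\sigma$ (else $K$ would contradict the hypothesis that $\sigma$ is a partial clique colouring); so some $v\in K$ was blank, and for any other $u\in K$ the edge $uv$ is monochromatic and meets a blank vertex, contradicting our assumption.

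Next I would perform the following experiment: independently, for each blank vertex $v$, choose $\sigma^*(v)$ uniformly at random from $L_v\setminus\{\blank\}$. Since no colour in $L_v$ equals $\sigma(w)$ for any already-coloured neighbour $w$ of $v$, any monochromatic edge touching a blank $v$ must have its other endpoint blank and belong to $T_{v,\sigma^*(v)}$. For each blank $v$ and each $c\in L_v\setminus\{\blank\}$, I would introduce the bad event
\[
A_{v,c}\colon \sigma^*(v)=c \text{ and } \sigma^*(w)=c \text{ for some } w\in T_{v,c}.
\]
Because $B_v$ fails, $|L_v|\geq L$ (and similarly at each $w\in T_{v,c}$), so a union bound gives
\[
\Pr[A_{v,c}]\leq \frac{1}{|L_v|-1}\cdot\frac{|T_{v,c}|}{L-1}\leq \frac{|T_{v,c}|}{(L-1)^2}.
\]
The event $A_{v,c}$ depends only on the random choices at $v$ and at the vertices of $T_{v,c}$, and two such events are mutually independent whenever these ``supports'' are disjoint.

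The remaining step is the local-lemma estimate. Using $\neg Z_v$ we have $\sum_{c\in L_v}|T_{v,c}|\leq L|L_v|/10$, which controls the aggregate probability weight of events $A_{v,c'}$ sharing the centre vertex $v$, while an analogous bound at each $w\in T_{v,c}$ controls the events centred at vertices on the $T$-side of the support of $A_{v,c}$. Combining these, the total $\sum\Pr[A_{v',c'}]$ over events $A_{v',c'}$ dependent on a fixed $A_{v,c}$ stays comfortably below the threshold required by the symmetric Lovász Local Lemma once $L=\Delta^{\epsilon/2}$ is large enough, so with positive probability no $A_{v,c}$ holds and the extension is a clique colouring. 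The main obstacle is carrying out this dependency-weight estimate cleanly, using $\neg Z_v$ simultaneously at $v$ and at each vertex of $v$'s neighbourhood that appears in the relevant $T$-sets; this is precisely the calculation executed in the proof of Lemma~5 of \cite{M}, and (crucially) it nowhere invokes the triangle-free hypothesis, so the same argument transfers verbatim to our setting.
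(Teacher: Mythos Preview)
Your proposal is correct and follows essentially the same approach as the paper: the paper states that the proof is verbatim that of Lemma~5 in \cite{M}, namely a standard Lov\'asz Local Lemma argument in which each blank vertex $v$ is assigned a uniformly random colour from $L_v\setminus\{\blank\}$, and the flaw conditions $\neg B_v$ and $\neg Z_v$ are used to bound the bad-event probabilities and their dependencies. One minor remark: the calculation you sketch (summing $\Pr[A_{v',c'}]$ over dependent events, with weights $|T_{v,c}|$ that vary) is really the \emph{asymmetric} Local Lemma rather than the symmetric one, which is indeed what Molloy uses.
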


In light of this lemma, we need only show that there is a partial colouring with no flaws. To do so, we adopt Molloy's procedure, which starts with the colouring in which every vertex is assigned  $\blank$, and then in each iteration  uncolours and then randomly recolours the  vertices in the neighbourhood of $v=v(f)$  for some carefully chosen flaw $f$ until no such flaws exist. 
The latter is done using a recursive algorithm called $\FIX(f,\sigma)$ in~\cite{M}. 
In this algorithm, when recolouring $N(v)$, each vertex $u$ is assigned a colour from $L_u$ uniformly at random.  
For our purposes we need to make a subtle change in the recolouring procedure. 
Define
\begin{itemize}
	\item $L^v_u$ as the set of colours in $\mathcal{C}_u$ not appearing in $N(u) - N(v)$, along with $\blank$.  
\end{itemize}
(That is, when computing $L^v_u$ we ignore the neighbours of $u$ that are in $N(v)$.) 
In our version of the algorithm $\FIX(f,\sigma)$, when recolouring $N(v)$, each vertex $u$ is assigned a colour {\em from $L^v_u$} uniformly at random, instead of $L_u$.   
The modified algorithm is described in Algorithm~\ref{algo-fix}. 

\begin{algorithm}
\caption{$\FIX(f,\sigma)$}
\label{algo-fix}
\begin{algorithmic}[1]
\State $v \gets$ $v(f)$
\ForEach{$u\in N(v)$}\label{step_mainLoop}
\State $\sigma(u) \gets$ a colour from $L^v_{u}$ chosen uniformly at random
\EndFor
\While{there are any flaws $B_w$ with $\dist(w,v)\leq 2$ or $Z_w$ with $\dist(w,v)\leq 3$}\label{step_mainLoop2}
\State $g \gets$ first such flaw
\State $\sigma \gets \FIX(g,\sigma)$
\EndWhile
\State \textbf{return} $\sigma$
\end{algorithmic}
\end{algorithm}

Let us remark that if the graph is triangle free, then $L^v_{u} = L_u$ always holds; thus, in the triangle-free case the two versions of the algorithm $\FIX$ are the same. 

Note that, by the definition of $L^v_u$, if the recolouring creates a monochromatic clique  $C$  of size at least two which is not coloured $\blank$, then all of the vertices of the clique are in $N(v)$. 
But then $C$ is not maximal as we can add $v$ to it. 
So recolouring leaves us with a partial clique colouring, as desired.  

Note also that if a call to $\FIX(f,\sigma)$ terminates, then we made some progress towards correcting the flaws. 

\begin{observation}[corresponds to Observation 6 in \cite{M}, proof is verbatim the same]
In the partial clique colouring returned by $\FIX(f,\sigma)$:
\begin{itemize}
    \item the flaw $f$ does not hold, and
    \item there are no flaws that did not hold in $\sigma$. 
\end{itemize}
\end{observation}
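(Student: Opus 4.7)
The plan is to prove both items simultaneously by induction on the depth of recursion of $\FIX$, taking for granted that each call terminates (this is handled separately, just as in~\cite{M}). The first item follows immediately from the guard of the while loop: the loop can only exit when no flaw $B_w$ with $\dist(w,v)\le 2$ or $Z_w$ with $\dist(w,v)\le 3$ remains, and since $f$ is either $B_v$ or $Z_v$ with $v=v(f)$, in particular $f$ does not hold at the end.

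For the second item, the key ingredient is a \emph{locality} claim: the initial random recolouring of $N(v)$ can create a new flaw only within a constant-radius ball around $v$. Indeed, $L_w$ depends only on $\sigma$ restricted to $N(w)$, so $B_w$ can change only when $N(w)\cap N(v)\ne\emptyset$, that is, $\dist(w,v)\le 2$. Likewise $T_{w,c}$ depends on $\sigma$ restricted to $N(w)$ and, via the blank $w'\in N(w)$, on $L_{w'}$, each of which in turn depends only on $\sigma$ restricted to $N(w')$; hence $\sum_{c\in L_w}|T_{w,c}|$ can only change if some vertex of $N(v)$ lies in $N(w)$ or in $N(N(w))$, forcing $\dist(w,v)\le 3$. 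So every flaw that holds immediately after the recolouring but did not hold in $\sigma$ is one of the local flaws that the while loop is designed to handle.

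The induction for the second item now goes through cleanly. Assume every recursive call $\FIX(g,\sigma')$ of smaller depth satisfies both conclusions of the observation. Throughout the while loop of $\FIX(f,\sigma)$, the set of flaws that hold is contained in the union of flaws present in the original $\sigma$ and the local flaws around $v$ created by the initial recolouring; each iteration selects such a local flaw $g$, and by the inductive hypothesis the recursive call removes $g$ without introducing any further flaw. When the loop terminates, no local flaw remains, so only flaws already present in $\sigma$ can still hold.

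I expect the main obstacle to be nailing down the locality claim precisely, particularly for $Z_w$: one must check that using $L^v_u$ instead of $L_u$ in the initial sampling does not enlarge the set of vertices whose flaw parameters can be affected (it does not, since $L^v_u$ depends only on colours at distance $\le 1$ from $u$), and that the radius $\dist(w,v)\le 3$ really captures every way $T_{w,c}$ can change. Once locality is verified, the induction itself is routine.
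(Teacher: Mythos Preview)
Your proposal is correct and follows essentially the same approach as the paper (which simply cites Molloy's proof of Observation~6 verbatim): the first item comes from the exit condition of the while loop, and the second item is a locality argument showing that the recolouring of $N(v)$ can only affect $B_w$ for $\dist(w,v)\le 2$ and $Z_w$ for $\dist(w,v)\le 3$, followed by an induction on the recursion. Your check that the modified lists $L_u^v$ do not enlarge the affected radius is exactly the point needed to carry Molloy's argument over to the present setting.
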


Thanks to the above observation, to obtain a partial clique colouring without any flaw it suffices to start with the all blank colouring, and then call $\FIX$ at most once for each of the at most $2n$ flaws of that colouring, where $n=|V(G)|$. 
Thus, to complete the proof of our theorem, it suffices to show that, with positive probability, these at most $2n$ calls to $\FIX$ all terminate. 
Molloy shows precisely this in the triangle-free case, the proof consists of Lemmas 7, 8, and 9 in \cite{M}. 
As it turns out, replacing the lists $L_u$ with the lists $L^v_u$ in the algorithm $\FIX$ is the only modification that needs to be done in Molloy's approach to prove our theorem. 
These three lemmas and their proofs can then be taken verbatim from \cite{M} up to straightforward changes reflecting the modified lists used in the recolouring step. 
For completeness, we now consider each lemma in turn and summarize the changes that need to be done in their proofs. 

We begin with the key lemma, bounding the probability that there is a flaw at $v$ after recolouring $N(v)$.  
The setup for this lemma is as follows. 
Each vertex $u\in N(v)$ has some some current list $L_u^v$ containing $\blank$ and perhaps other colours. 
We give each vertex $u\in N(v)$ a colour chosen randomly from $L_u^v$, where the choices are made uniformly and independently. 
This determines $L_v$ and $T_{v, c}$. 

\begin{lemma}[Lemma 7  in \cite{M}]
\label{lem7} 
$ $
\begin{enumerate}
    \item[(a)] $\mathbf{Pr}(|L_v| < L) < \Delta^{-4}$.
    \item[(b)] $\mathbf{Pr}(\sum_{c\in L_v}|T_{v,c}| > \frac{1}{10} L \times |L_v|) < \Delta^{-4}$.
\end{enumerate}
\end{lemma}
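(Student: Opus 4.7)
The plan is to follow Molloy's original proof of Lemma 7 essentially verbatim, since the modification from $L_u$ to $L^v_u$ has been designed precisely to preserve his probabilistic setup. The key structural observation is that, once we condition on the colouring of $V(G)\setminus N(v)$, each list $L^v_u$ (for $u\in N(v)$) is determined and contains $\blank$, and the recolouring step then picks $\sigma(u)$ independently and uniformly from $L^v_u$. This is exactly the probability space Molloy analyses in the triangle-free case; the lists simply happen to be non-identical from vertex to vertex, but his argument already handles arbitrary lists containing $\blank$.

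For part (a), I would first compute $\mathbf{E}[|L_v|]$ by writing $|L_v|=1+\sum_{c=1}^{q}\mathbf{1}[c\notin \sigma(N(v))]$ and using independence to obtain
\[
\mathbf{E}[|L_v|] \;=\; 1 + \sum_{c=1}^{q}\prod_{u\in N(v),\, c\in L^v_u}\!\left(1-\tfrac{1}{|L^v_u|}\right).
\]
Using standard estimates ($1-x\ge e^{-x-x^2}$ for small $x$ and $|L^v_u|\le q=\lfloor(1+\epsilon)\Delta/\log\Delta\rfloor$), one shows, exactly as in Molloy, that $\mathbf{E}[|L_v|]$ exceeds $L=\Delta^{\epsilon/2}$ by a wide margin. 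The random variable $|L_v|$ is $1$-Lipschitz in the independent coordinates $\sigma(u)$ and admits a small certificate for each outcome, so Talagrand's inequality delivers the claimed $\mathbf{Pr}(|L_v|<L)<\Delta^{-4}$.

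For part (b), I would rewrite
\[
\sum_{c\in L_v}|T_{v,c}| \;=\; \sum_{w\in N(v)}\mathbf{1}[\sigma(w)=\blank]\cdot\bigl|L_w\cap(L_v\setminus\{\blank\})\bigr|
\]
and bound its expectation. The probability that $\sigma(w)=\blank$ is $1/|L^v_w|$, and conditional on this the choices on $N(v)\setminus\{w\}$ are still independent, so the expected value of $|L_w\cap L_v|$ is again a product estimate over the remaining neighbours of $v$. Molloy's computation then shows the whole expectation is at most a small constant fraction of $L\cdot \mathbf{E}[|L_v|]$, and a second Talagrand application yields the $\Delta^{-4}$ tail bound.

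The only real obstacle is verifying Talagrand's hypotheses -- a bounded Lipschitz constant and a small certificate for every outcome -- but these combinatorial verifications depend on the event structure rather than on which particular lists are used, so they transfer unchanged from Molloy's argument. The substitution of $L^v_u$ for $L_u$ is harmless here precisely because $L^v_u$ is fixed once the colouring on $V(G)\setminus N(v)$ is frozen, whereas $L_u$ would otherwise depend on the random choices made inside $N(v)$ and break the independence the probability estimates rely on; this is the whole reason for the modification.
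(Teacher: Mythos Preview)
Your proposal is correct and follows essentially the same approach as the paper, which is simply to take Molloy's proof verbatim and substitute $L^v_u$ for $L_u$ throughout; your observation that $L^v_u$ is frozen once the colouring outside $N(v)$ is fixed is exactly the structural point that makes this work. The one subtlety the paper flags that you gloss over is in part (b): after the substitution, the line $\mathbf{E}(|T_{v,c}|) = \sum_{u: c\in L_u} 1/|L_u|$ must become the inequality $\mathbf{E}(|T_{v,c}|) \le \sum_{u: c\in L^v_u} 1/|L^v_u|$, since a vertex $u$ with $\sigma(u)=\blank$ and $c\in L^v_u$ need not have $c\in L_u$ after the recolouring of $N(v)$ (though $L_u\subseteq L^v_u$ guarantees the reverse implication, which is what the bound uses).
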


The proof of this lemma is obtained by taking that of Lemma 7 in \cite{M} and replacing every occurrence of the list $L_u$ (with $u$ a neighbour of $v$) with the list $L_u^v$, to match our modified recolouring procedure. 
There are 6 such occurrences before part (a) of the proof, 7 in part (a) (including one in the proof of the claim), and 6 in part (b).  
There is also another small change to make in part (b) of the proof, namely, when considering colors $c\notin \Psi$, the line 
    \[
    \mathbf{E}(|T_{v,c}|) = \sum_{u: c\in L_u} \frac{1}{|L_u|} < \rho(c) \leq \Delta^{\epsilon/4}
    \]
    should become 
    \[
    \mathbf{E}(|T_{v,c}|) \leq \sum_{u: c\in L^v_u} \frac{1}{|L^v_u|} < \rho(c) \leq \Delta^{\epsilon/4}.
    \]
The reason the equality sign becomes an inequality is the following: For a vertex $u\in N(v)$ with $c\in L^v_u$, the probability that $u$ gets coloured $\blank$ in our recolouring is exactly $\frac{1}{|L^v_u|}$. However, not all these vertices $u$ will contribute to $T_{v,c}$, only those such that the colour $c$ is in $L_u$ after recolouring\footnote{Note that, while $L^v_u$ is unaffected by our recolouring, $L_u$ on the other hand is, which is why we need to distinguish between $L_u$ before and after recolouring.}, hence the inequality.  

Next we turn to Lemmas 8 and 9 from \cite{M}. 
These lemmas are used to perform a Moser-Tard\'os type analysis of the algorithm $\FIX$. 
First we introduce some notations, adapted from~\cite{M}. 
Suppose that we are given a collection of lists $\mathcal{L}=\{L_u^v: u \in N(v)\}$ of available colours for the neighbours of a vertex $v$. 
Let $\mathcal{B}(\mathcal{L})$ ($\mathcal{Z}(\mathcal{L})$) be the set of all colour assignments from these lists such that the flaw $B_v$ (respectively $Z_v$) holds. 
Then by Lemma~\ref{lem7} we have $|\mathcal{B}(\mathcal{L})|, |\mathcal{Z}(\mathcal{L})| < \Delta^{-4} \prod_{u\in N(v)} |L_u^v|$, which is a key fact used in the proof of the last lemma below. 

The setup for Lemma 8 from \cite{M} is as follows. 
Let $n:=|V(G)|$ and consider any of the at most $2n$ `root' calls to $\FIX$ we make to fix our initial partial clique colouring, say we call it with flaw $f$ and partial clique colouring $\sigma_0$. 
(Thus, for the very first call $\sigma_0$ is the all blank colouring.) 
Considering the subsequent recursive calls made by the algorithm, we let $\sigma_t$ denote the partial clique colouring just after the $t$-th recolouring step (the for loop in lines 2 and 3 of algorithm $\FIX$). 
We let $R_t$ describe the random choices that have been made during the $t$-th recolouring step, and finally we let $H_t$ be a ``log'' listing in a concise manner the recursive calls that were made during this particular call of $\FIX$ in the while loop (lines 4--6 of the algorithm), see~\cite{M} for the precise definition of the log.   

\begin{lemma}[Lemma 8 in \cite{M}]
Given $\sigma_0, \sigma_t, f, H_t$ we can reconstruct the first $t$ steps of $\FIX$. 
\end{lemma}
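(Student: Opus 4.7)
My plan is to follow Molloy's proof of Lemma 8 in~\cite{M} essentially verbatim, with cosmetic changes reflecting our modified recoloring lists. The argument proceeds by induction on $t$, the base case $t=0$ being vacuous.

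For the inductive step, the log $H_t$ records (as defined in~\cite{M}) the recursive calls of $\FIX$ in a structured manner, so that from $f$ and $H_t$ one reads off the sequence of flaws $f_1=f, f_2, \ldots, f_t$ addressed during the first $t$ recoloring steps, and in particular the sequence of vertices $v_i = v(f_i)$ whose neighbourhoods are recoloured. Starting from $\sigma_0$ and processing the steps in order, I would maintain the inductive invariant that $\sigma_{i-1}$ has been reconstructed: the lists $L_u^{v_i}$ for $u \in N(v_i)$ are then determined, since $L_u^{v_i}$ is a function of $\sigma_{i-1}$ alone (depending only on the colours appearing in $N(u)\setminus N(v_i)$), and by Molloy's bookkeeping the data $(\sigma_t, H_t)$ allow us to recover the random colours $R_i$ assigned during step $i$ and thereby obtain $\sigma_i$ from $\sigma_{i-1}$.

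The main obstacle---and the only real point of departure from Molloy's proof---is verifying that our modification of the recoloring step, replacing $L_u$ by $L_u^v$, does not disrupt the reconstruction. It does not: $L_u^{v_i}$ is a deterministic function of $\sigma_{i-1}$ in the same way that $L_u$ would be, and the algorithm's control flow (the rule selecting the ``first'' flaw in the while loop of Algorithm~\ref{algo-fix}) is unaffected by which list each neighbour is recoloured from. Consequently Molloy's reconstruction argument applies with ``$L_u$'' replaced by ``$L_u^v$'' throughout, and the lemma follows.
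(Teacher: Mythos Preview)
Your proposal is correct and follows essentially the same approach as the paper: take Molloy's proof of Lemma~8 verbatim and replace each occurrence of $L_u$ by $L_u^v$. The paper singles out one crucial observation in Molloy's argument---that the collection $\mathcal{L}=\{L_u^v: u\in N(v)\}$ is unchanged by recolouring $N(v)$ (which Molloy gets from triangle-freeness, and which here holds by the definition of $L_u^v$)---and your parenthetical remark that $L_u^{v_i}$ depends only on the colours appearing in $N(u)\setminus N(v_i)$ is exactly this observation.
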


Thus, knowing $\sigma_0, \sigma_t, f, H_t$ is enough to deduce all the random choices that were made up to step $t$, that is, $R_1, R_2, \dots, R_t$. 
The proof of this lemma is obtained from that of Lemma 8 in \cite{M} by replacing every occurrence of the list $L_u$ (with $u$ a neighbour of $v$) with the list $L_u^v$. 
Also, one observation needs to be slightly adapted: 
In 10th and 11th line in the proof of Lemma 8 in \cite{M}, Molloy remarks crucially that in his case the set $\mathcal{L}=\{L_u: u \in N(v)\}$ does not change after recolouring $N(v)$ because the graph is triangle free.  
In our setting, where the graph is arbitrary and $\mathcal{L}=\{L_u^v: u \in N(v)\}$, this remains true by the very definition of $L_u^v$. 

Finally, using the previous two lemmas, Lemma 9 from \cite{M} shows that any call $\FIX(f, \sigma)$ terminates with positive probability:

\begin{lemma}[Lemma 9 in \cite{M}]
For any partial clique colouring $\sigma$ and any flaw $f$ of $\sigma$, the probability that $\FIX(f, \sigma)$ performs at least $2n$ recolouring steps is at most 
$\Delta^{-n/2}$, where $n=|V(G)|$. 
\end{lemma}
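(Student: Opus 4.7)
The plan is to transport the Moser--Tard\'os-style entropy-compression argument of Molloy to the present setting, with the lists $L_u$ consistently replaced by $L_u^v$. The single structural feature that makes this substitution work is that $L_u^v$ remains unchanged when we recolour the other vertices of $N(v)$, which follows directly from its definition, where neighbours of $u$ lying inside $N(v)$ are explicitly excluded. This invariance plays exactly the role that triangle-freeness plays in Molloy's proof.

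Concretely, set $T := 2n$ and consider the event $\mathcal{E}$ that $\FIX(f,\sigma)$ performs at least $T$ recolouring steps. By Lemma~8, on $\mathcal{E}$ the random choices $R_1, \dots, R_T$ are uniquely determined by the pair $(\sigma_T, H_T)$, given the already-fixed $\sigma_0 = \sigma$ and $f$. Thus
\[
\Pr(\mathcal{E}) \;\le\; \sum_{(\sigma_T,\,H_T)} \;\prod_{t=1}^T\; \prod_{u \in N(v_t)} \frac{1}{|L^{v_t}_u|},
\]
where $v_t$ is the vertex processed at step $t$ (determined by $H_T$) and the sum runs over admissible pairs.

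I would then bound the sum via three ingredients. The final partial colouring $\sigma_T$ has at most $(q+1)^n \le \Delta^n$ possibilities. The log $H_T$ records, at each of its $T$ entries, a flaw centred at a vertex within distance at most $3$ plus a constant-size marker for the tree structure, contributing at most $(c\Delta^3)^T$ possibilities for some absolute constant $c$. For each fixed log entry, the key probability input from Lemma~\ref{lem7}, namely $|\mathcal{B}(\mathcal{L})|, |\mathcal{Z}(\mathcal{L})| < \Delta^{-4}\prod_u|L^v_u|$, supplies a gain of $2\Delta^{-4}$ per step, since the random assignment at step $t$ must lie in one of these low-probability sets for the algorithm to recurse. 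Multiplying yields $\Pr(\mathcal{E}) \le \Delta^n \cdot (c\Delta^3)^T \cdot (2\Delta^{-4})^T = \Delta^n \cdot (2c\Delta^{-1})^T$, which at $T = 2n$ sits comfortably below $\Delta^{-n/2}$ for $\Delta$ sufficiently large.

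The main obstacle, and the only genuinely new point compared to Molloy, is verifying that Lemma~\ref{lem7} delivers the $\Delta^{-4}$ factor for each triggering flaw even when that flaw is $B_w$ or $Z_w$ at some $w$ of positive distance from $v_t$. This is handled as in Molloy's original argument, by attributing one instance of Lemma~\ref{lem7} to each recursive call (applied at the centre $w$ of the next $\FIX$ call rather than $v_t$), with the $\Delta^3$ enumeration over nearby candidate $w$'s already absorbed into the log count. Because $L_u^v$ is invariant under the local recolouring, the calculation then closes verbatim in the modified lists.
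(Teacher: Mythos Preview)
Your proposal is correct and follows exactly the approach the paper takes: the paper states that the proof is verbatim that of Lemma~9 in Molloy with the lists $L_u$ replaced by $L_u^{v(f_i)}$ in the definition of $\Lambda_i$, and you have faithfully sketched that entropy-compression argument with precisely this substitution. One small correction to your framing: the ``obstacle'' you raise in the final paragraph (that the triggering flaw sits at some $w$ at positive distance from $v_t$) is not new to the clique-colouring setting---it is already present and dealt with in Molloy's triangle-free proof---so the only genuinely new ingredient is the invariance of $L_u^v$ under recolouring $N(v)$, which you correctly identify as the replacement for triangle-freeness.
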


The proof is verbatim the same as that of Lemma 9 in \cite{M}, with the list $L_u$ replaced with $L_u^{v(f_i)}$ in the definition of $\Lambda_i$.  

Given this lemma, the proof of our theorem follows: Starting with our initial all blank partial clique colouring, with probability at least $1 - 2n\Delta^{-n/2} > 0$ 
we perform at most $2n$ separate `root' calls $\FIX(f, \sigma)$ and eventually obtain a partial clique colouring without any flaws. 
Lemma~\ref{lem5} then implies that this partial clique colouring can be turned into a clique colouring. 

In summary, by slightly modifying the recolouring procedure in the algorithm $\FIX$ as we explained, and performing the straightforward changes described above, Molloy's proof becomes a proof of our Theorem. 
\end{proof}

\section{A Connection to Perfect Graphs and An Intriguing Conjecture}

We can associate to any graph $G$ the clique hypergraph ${\cal H}(G)$, whose vertices are $V(G)$ and whose edge sets are the maximal 
cliques of $G$ of size at least two.  Then the usual colouring of $G$  is a colouring of $V(G)$ so that no edge of ${\cal H}(G)$  contains two vertices of the same colour, while a clique colouring of $G$ is one in which no edge of ${\cal H}(G)$ is monochromatic. 

It seems possible that graphs $G$ that can be properly coloured with few colours would also permit colourings with  relatively  few colours such that no edge of ${\cal H}(G)$ is monochromatic. Of course, a colouring of the first type requires 
$\omega(G)$ colours, where $\omega(G)$ is the size of the largest clique in $G$. So, an obvious conjecture in this direction is that if $G$ can be coloured using $\omega(G)$ colours then $G$ has bounded clique chromatic number. 

However, insisting  that $G$ has chromatic number $\omega(G)$ does not tell us anything about the structure of $G$. Indeed, for any graph $H$, the disjoint union of $H$ and a clique with $|V(H)|$ vertices yields such a graph $G$. In particular there are such graphs $G$ with arbitrarily high clique chromatic number. 
A graph is {\it perfect}~\cite{B} if for every induced subgraph $H$ of $G$, the chromatic number of $H$ is $\omega(H)$. A second obvious, but again false, conjecture,
discussed above, is that perfect graphs have bounded clique colouring number. 

Perfection is however related to two colourings of the hypergraph of the {\em maximum} cliques of a graph. 
Following Hoang and McDiarmid~\cite{HM}, we say that a graph is {\em $k$-divisible} if it can be $k$-coloured so that no 
maximum  clique of size at least two is monochromatic.  Now, if $G$ has an $\omega(G)$ proper colouring then giving 
one  of the stable sets of the colouring colour one, and the others colour 2 shows that $G$ is $2$-divisible. 
Thus every induced subgraph of a perfect graph is $2$-divisible.

Now an odd cycle of size at least five is not $2$-divisible, because it is not bipartite. Thus if every induced subgraph of $G$ is $2$-divisible then $G$ contains no induced odd cycle of length at least five. Hoang and McDiarmid~\cite{HM} conjectured that this necessary condition is also sufficient. Combined with the Perfect Graph Theorem  this would yield: ``Every induced subgraph of a graph $G$ is $\omega(G)$ colourable if and only if every induced subgraph of both $G$ and its complement is $2$-divisible.''

\section*{Acknowledgement}

This research was carried out at the Eighth Bellairs Workshop on Geometry and Graphs. The authors would like to thank the organizers for the invitation to participate in the workshop, and McGill University for facilitating their work by making the venue available for the workshop. Montreal, Ottawa, Brussels, and Krak\'ow are all very cold in January. 
The authors would also like to thank Michael  S.O'B.\ Molloy for most of the proof, and the referees for their comments.


\begin{thebibliography}{DSS}

\bibitem{BGGPS}
\newblock G. Basco, S. Gravier, A. Gy\'arf\'as, M. Preissmann, and A. Seb\H{o}.
\newblock Colouring the maximal cliques of a Graph. 
\newblock {\em SIAM J.\ Discrete Mathematics}, 17(3):361--376, 2004.

\bibitem{B}
\newblock C. Berge.
\newblock  Perfect graphs.
\newblock Six papers on graph theory.  
\newblock Indian Statistical Institute Calcutta, 1--21, 1993.

\bibitem{CPTT}
\newblock  P. Charbit, I. Penev, S. Thomass\'e, and N. Trotignon,
\newblock Perfect graphs of arbitrarily large clique chromatic number. 
\newblock {\em J.\  Combinatorial Theory Ser.\  B}, 116:456--464, 2016. 

\bibitem{DSSW}
\newblock D. Duffus, B. Sauer, N. Sands, and R. E. Woodrow.
\newblock Two colouring all two element maximal antichains. 
\newblock {\em J.\  Combinatorial Theory Ser.\  A}, 57(1):109--116, 1991.

\bibitem{DKT}
\newblock D. Duffus, H. Kierstead, and W. T. Trotter.
\newblock Fibres and ordered set colouring. 
\newblock {\em J.\  Combinatorial Theory Ser.\  A}, 58(1):158--164, 1991.


\bibitem{HM}
\newblock C. Hoang and C. McDiarmid.
\newblock On the divisibility of graphs. 
\newblock  {\em Discrete Mathematics}, 242:145--156, 2002. 

\bibitem{JT}
\newblock T. Jensen and B. Toft.
\newblock Graph colouring problems. 
\newblock  John Wiley and Sons, 1994. 

\bibitem{K}
\newblock J.-H. Kim.
\newblock The {R}amsey number $R(3,t)$ has order of magnitude  $t^2/\log~t$.
\newblock  {\em Random Structures and Algorithms}, 7:173--207, 1995. 

\bibitem{MMP}
\newblock  C. McDiarmid, D. Mietsche, and P. Pra\l{}at.
\newblock  Clique colouring of binomial random graphs.
\newblock  {\em Random Structures and Algorithms}, 54(4):589--614, 2019. 

\bibitem{MS}
\newblock  B. Mohar and R. Skrekovski.
\newblock  The {G}rotzsch {T}heorem for the hypergraph of maximal cliques.
\newblock  {\em Electronic J.\  of Combinatorics}, 6, 13 pp, 1994. 

\bibitem{M}
\newblock M. Molloy.
\newblock The list chromatic number of graphs  with small clique number.
\newblock {\em J.\  Combinatorial Theory Ser.\  B}, 134:234--264, 2019. 

\end{thebibliography}
\end{document}